\newtheorem{The}{Theorem}
\newtheorem{Cor}[The]{Corollary}
\newtheorem{Pro}[The]{Proposition}
\newtheorem{Lem}[The]{Lemma}
\numberwithin{equation}{section}
\numberwithin{The}{section}
\newcommand{\address}[1]{\vskip3mm\noindent#1}
\newcommand{\N}{{\mathbb{N}}}
\newcommand{\C}{{\mathbb{C}}}
\newcommand{\Z}{{\mathbb{Z}}}
\newcounter{subeqn}\renewcommand{\thesubeqn}{\theequation\alph{\subeqn}}
\newcommand{\subeqn}{
\refstepcounter{subeqn}
\tag{\thesubeqn}}
\title{On the Power of Integers and Conductors of Quadratic Fields\\}
\author{Nihal Bircan $^{\ast}$ and Michael E. Pohst$^{\ast\ast}$}
\begin{document}

\maketitle
\begin{center}
 \address {$^{\ast}$ Berlin University of Technology, Institute for Mathematics, MA $8-1$, Strasse des $17.$ Juni $136$ D-$10623$ Berlin, Germany\\
\c{C}ank{\i}r{\i} Karatekin University, Department of Mathematics, TR $18100$, \c{C}ank{\i}r{\i}, Turkey\\

bircan@math.tu-berlin.de\\
$^{\ast\ast}$ Berlin University of Technology, Institute for Mathematics, MA $8-1$, D-$10623$ Berlin, Germany\\
pohst@math.tu-berlin.de }
\end{center}

\begin{abstract}
We consider the integers $\alpha$ of the quadratic field $ \mathbb{Q} (\sqrt{d}$ $ )$ where $d\in \Z$ is square-free and $d\equiv 1,2,3 \pmod 4$. Let $p$ be an odd prime. 
Using the embedding into $ \text{GL}(2,\mathbb{Z})$ we obtain bounds for the first $\nu\in\N$ such that $\alpha^\nu\equiv 1\pmod p.$ For the 
conductor $f$, we then study the first integer $n=n(f)$ such that $\alpha^n\in\mathcal{O}_{f}$. We obtain bounds for $n(f)$ and for $n(fp^{k})$.
The most interesting case is that $\alpha$ is the fundamental unit of $ \mathbb{Q} (\sqrt{d}$ $ )$.
\end{abstract}
{\bf $2010$ Mathematics Subject Classification :}{$11$R$11$, $11$R$04$, $42$C$05$}\\
{\bf Keywords and phrases:} {Chebyshev polynomials, conductor, integer of quadratic field}

\section{Introduction}
We study the quadratic field $\mathbb{Q}(\sqrt{d}$ $)$ where $d\in \Z$ is square-free. We write $d=4q+r$ where $r=1,2,3$. The 
algebraic integers $\alpha$ of  $ \mathbb{Q} (\sqrt{d}$ $ )$ are given by
\begin{align} \label{1.1} \alpha =\begin{cases}
a+b\sqrt{d} , \text{\ \ } a,b\in \mathbb{Z} & \text{if } r=2,3\\ 
\tfrac{1}{2} (a+b\sqrt{d} ),\ \ a,b\in \mathbb{Z} ,\ a+b\in 2\mathbb{Z} &
\text{if } r=1.
\end{cases}\end{align}
Throughout the paper we consider any quadratic field and its integers $\alpha$ of any norm$\neq 0$. Let $p$ be an odd prime. First we study the problem to find small 
exponents $n$ such that $\alpha^{n}\equiv 1 \pmod p$. We will extensively use Legendre symbols and all the congruences are modulo $p$ unless otherwise stated.

We adapt the classical Chebyshev polynomials $T_{n}$ and $U_{n}$ ( for more information see \cite{MOS66} section $5.7$, \cite{AbSt72} chapter $22$) by defining
 \begin{align} \label{2.1} t_{n} (x)=t_{n} (x;s)=2s^{n/2} T_{n}
(\frac{x}{2\sqrt{s} }),\end{align}
\begin{align} \label{2.2} u_{n} (x)=u_{n} (x;s)=s^{n/2} U_{n}
(\frac{x}{2\sqrt{s} })\end{align}
for $n\in \N_{0}$ where $s$ is the norm of an algebraic integer in the quadratic field. These are unimodular polynomials with integer coefficients. For technical 
reasons we use this modification of Chebyshev polynomials in order to treat the case $d\equiv 1 \pmod 4$ together with $d\equiv 2,3 \pmod 4$. For the properties 
of this adapted polynomials see Section $6$. Then we specialize the results in the paper  \cite{BiPom} about GL$(2,\Z)$ to quadratic fields. For 
previous works on this subject see e.g. \cite{De79}, \cite{Jar05},\cite{Ki89}.

In Section $2$, we consider matrices $A\in \text{GL}(2,\Z)$ and how the integers of any quadratic field $\mathbb{Q}(\sqrt{d}$ $)$ can be embedded in GL$(2,\Z)$. We also 
prove that $\alpha^{n}\equiv 1 \pmod p$ holds if and only if $A^{n}\equiv I \pmod p$. In the next sections we consider the algebraic integers of 
Norm$(\alpha)\not\eq 0$ and Norm$(\alpha)=\pm1$ respectively. In these sections we apply 
the results of \cite{BiPom} to the quadratic field case. In Section $5$, for a given conductor $f$, we give upper estimates for 
\begin{align*}
 n(f)\coloneqq min\{\nu\in\N: \alpha^\nu\in \mathcal{O}_{f}\}
\end{align*}
and $n(fp^{k})$ for $ k\in\N$ and the odd prime $p.$
\section{The Embedding of Algebraic Integers of $ \mathbb{Q} (\sqrt{d}$ $ )$  }
Let $ A\in $ GL$ (2,\mathbb{C} )$ , that is
\begin{align} \label{3.1} A=\begin{pmatrix} a&b \\ c&d \end{pmatrix},\ \
a,b,c,d\in \mathbb{C} ,\ ad-bc\neq 0.\end{align}
We always write
\begin{align} \label{3.2} x:= \text{tr } A=a+d,\ \ s:= \text{ det }
A=ad-bc.\end{align}

\begin{Pro} \label{T3.1}  For $ n\in \mathbb{N} $ we have
\begin{align} \label{3.3} A^{n} =u_{n-1} (x)A-su_{n-2} (x)I,\end{align}
\begin{align} \label{3.4} A^{n} =\tfrac{1}{2} t_{n} (x)I+u_{n-1}
(x)(A-\tfrac{1}{2} xI).\end{align}
\end{Pro}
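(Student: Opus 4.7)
The plan is to reduce both formulas to the Cayley--Hamilton identity $A^2 = xA - sI$ (which holds because $x$ and $s$ are the trace and determinant of the $2\times 2$ matrix $A$), combined with the three-term recursion
\[
u_n(x) = x\, u_{n-1}(x) - s\, u_{n-2}(x),
\]
which I would obtain from the classical Chebyshev recursion $U_n(y) = 2y U_{n-1}(y) - U_{n-2}(y)$ by substituting $y = x/(2\sqrt{s})$ and multiplying through by $s^{n/2}$ (I would simply cite the relevant properties from Section~6 rather than reprove them).

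For (\ref{3.3}) I would argue by induction on $n$. The base cases follow from $u_{-1}(x)=0$, $u_0(x)=1$ and $u_1(x)=x$: for $n=1$ the right-hand side is $A$, and for $n=2$ it reads $xA - sI$, which is exactly Cayley--Hamilton. For the step, I would multiply the hypothesis $A^n = u_{n-1}(x) A - s\, u_{n-2}(x) I$ on the left by $A$, substitute $A^2 = xA - sI$, and regroup:
\[
A^{n+1} = u_{n-1}(x)(xA - sI) - s\, u_{n-2}(x) A = \bigl(x u_{n-1}(x) - s u_{n-2}(x)\bigr) A - s\, u_{n-1}(x) I,
\]
and the $u_n$-recursion identifies the bracket as $u_n(x)$.

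For (\ref{3.4}) I would first establish the auxiliary identity $t_n(x) = 2 u_n(x) - x\, u_{n-1}(x)$, which is the $(t,u)$-version of the classical relation $T_n(y) = U_n(y) - y\,U_{n-1}(y)$ (immediate from the trigonometric parametrization). Substituting this into the right-hand side of (\ref{3.4}), the terms $-\tfrac{x}{2} u_{n-1}(x) I$ cancel and what remains is
\[
u_n(x) I + u_{n-1}(x) A - x\, u_{n-1}(x) I = u_{n-1}(x) A + \bigl(u_n(x) - x u_{n-1}(x)\bigr) I,
\]
and one last application of the $u_n$-recursion rewrites the coefficient of $I$ as $-s\, u_{n-2}(x)$. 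By (\ref{3.3}) this equals $A^n$.

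There is no real obstacle: the content is entirely the Cayley--Hamilton equation plus two routine identities for the normalized Chebyshev polynomials. The only care required is in bookkeeping the conventions $u_{-1}=0,\,u_0=1$ so that the formulas are valid for all $n\in\mathbb{N}$, including $n=1$ where the $u_{n-2}$ term disappears.
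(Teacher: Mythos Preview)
Your argument is correct and is the standard way to establish these identities: Cayley--Hamilton for the two-term linear recursion in $A$, the matching three-term recursion for $u_n$ coming from the Chebyshev recursion, and then the relation $t_n = 2u_n - x\,u_{n-1}$ to pass from (\ref{3.3}) to (\ref{3.4}). One small wording quibble: in your derivation of (\ref{3.4}) the two $-\tfrac{x}{2}u_{n-1}(x)I$ contributions do not ``cancel'' but rather combine to $-x\,u_{n-1}(x)I$, which is exactly what you then use; the arithmetic you write is fine, only the verb is misleading.

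As for comparison with the paper: the paper does not actually give a proof of this proposition. It simply remarks that the result is known in various forms and points to \cite{MaRe03} for the case $s=1$. So your write-up supplies the details the authors chose to omit, and does so by the expected route.
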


This proposition is known in various forms. For instance, (\ref{3.3})  with
$ s=1$  
is Lemma $3.1.3$ in \cite{MaRe03} where $ p_{n} =u_{n-1}$  and $ q_{n}
=u_{n-2}$ . The last
matrix in (\ref{3.4})  has zero trace and it follows that
\begin{align} \label{3.5}  \text{tr } A^{n} =t_{n} (x).\end{align}
With the notation (\ref{3.1})  we can write (\ref{3.4})  as
\begin{align} \label{3.6} A^{n} =\begin{pmatrix} \tfrac{1}{2} t_{n}
(x)+\tfrac{1}{2} (a-d)u_{n-1} (x)&bu_{n-1} (x) \\ 
cu_{n-1} (x)&\tfrac{1}{2} t_{n} (x)-\tfrac{1}{2} (a-d)u_{n-1} (x)
\end{pmatrix}.\end{align}
Now, we consider the algebraic integers $\alpha$ of $\mathbb{Q}(\sqrt{d}$ $)$ using the notation (\ref{1.1}). We define a homomorphism $ \varphi $ 
of the multiplicative group of integers $ \alpha \neq 0$  into GL$ (2,\mathbb{Z} )$. For $ r=2,3$  we set (see e.g. \cite[p. $38$]{Ba03})
\begin{align} \label{3.8} \varphi (\alpha ):=A= \text{ } \begin{pmatrix} a&b
\\ bd&a \end{pmatrix}\end{align}
whereas for $ r=1$  we set
\begin{align} \label{3.9} \varphi (\alpha ):=A=\begin{pmatrix} \tfrac{1}{2}
(a+b)&b \\ qb&\tfrac{1}{2} (a-b) \end{pmatrix} \text{ .}\end{align}
It can be checked that this indeed defines an injective homomorphism.
We have
\begin{align} \label{3.10} s= \text{ det } A= \text{ Norm(} \alpha  \text{)}
=\begin{cases}
a^2 -b^2 d \text{ } & \text{if } r=2,3\\  \text{ } 
\tfrac{1}{4} (a^2 -b^2 d) \text{ } & \text{if } r=1, \text{ } 
\end{cases}\end{align}
\begin{align} \label{3.11} x= \text{ tr } A=\begin{cases}
2a& \text{if } r=2,3\\ 
a& \text{if } r=1.
\end{cases}\end{align}
Since $ A^{n} =\varphi (\alpha ^{n} )$ and $ \varphi $ is injective, it
follows from (\ref{3.6})  that
\begin{align} \label{3.12} \alpha ^{n} =\begin{cases} \text{ } 
\tfrac{1}{2} t_{n} (2a)+u_{n-1} (2a)b\sqrt{d} & \text{if } r=2,3\\ 
\tfrac{1}{2} t_{n} (a)+\tfrac{1}{2} u_{n-1} (a)b\sqrt{d} & \text{if } r=1.
\end{cases}\end{align}
\begin{Pro}
 If $p$ is an odd prime and $\alpha_{k}$, $\alpha_{m}$ are integers of $\mathbb{Q}(\sqrt{d}$ $)$ then $\alpha_{k}\equiv \alpha_{m} \pmod p \text{ if and only if } \varphi(\alpha_{k})\equiv \varphi(\alpha_{m}) \pmod p $.
\end{Pro}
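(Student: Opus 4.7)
The plan is to reduce both directions to a single statement about a difference. Note that although $\varphi$ is introduced as a multiplicative homomorphism on nonzero integers, the explicit formulas (\ref{3.8}) and (\ref{3.9}) show that $\varphi$ is $\mathbb{Z}$-linear on $\mathcal{O}_{\mathbb{Q}(\sqrt{d})}$ (and hence extends to a ring embedding into $M_2(\mathbb{Z})$). So writing $\alpha := \alpha_k - \alpha_m \in \mathcal{O}$, the claim becomes: $\alpha \in p\mathcal{O}$ if and only if every entry of $\varphi(\alpha)$ is divisible by $p$. I would then split into the two cases of (\ref{1.1}) and verify this by direct inspection of the matrix entries, the oddness of $p$ entering only in the case $r=1$.

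For $r \in \{2,3\}$, the set $\{1,\sqrt{d}\}$ is a $\mathbb{Z}$-basis of $\mathcal{O}$, so $\alpha = a+b\sqrt{d} \in p\mathcal{O}$ iff $p \mid a$ and $p \mid b$. On the matrix side, $\varphi(\alpha)$ has entries $a,b,bd,a$, so $\varphi(\alpha) \equiv 0 \pmod p$ is equivalent to the same two divisibilities. This direction is essentially a one-line check.

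For $r=1$, write $\alpha = \tfrac{1}{2}(a+b\sqrt{d})$ with $a \equiv b \pmod 2$. The condition $\alpha \in p\mathcal{O}$ means $\alpha = p\gamma$ for some $\gamma = \tfrac{1}{2}(c+e\sqrt{d}) \in \mathcal{O}$, i.e., $a = pc$, $b = pe$, with $c \equiv e \pmod 2$. The matrix $\varphi(\alpha)$ has entries $\tfrac{1}{2}(a+b), b, qb, \tfrac{1}{2}(a-b)$, so $\varphi(\alpha) \equiv 0 \pmod p$ says $p \mid b$ and $p \mid \tfrac{1}{2}(a+b)$ (the entry $qb$ is automatic, and the last entry then follows by subtraction). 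For the nontrivial direction, from $p \mid \tfrac{1}{2}(a+b)$ and $p$ odd I multiply by $2$ to obtain $p \mid a+b$, hence $p \mid a$; then the parity condition $a/p \equiv b/p \pmod 2$ needed for $\alpha/p \in \mathcal{O}$ follows from $a \equiv b \pmod 2$ together with $p$ odd, which preserves parity under division. The reverse direction (from $\alpha \in p\mathcal{O}$ to congruent matrices) is immediate from $a=pc$, $b=pe$ and the evenness of $c+e$.

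The main obstacle is exactly the $r=1$ case: the half-integer coordinates used both in the definition of $\mathcal{O}$ and in the matrix entries require transporting divisibility between $x$ and $x/2$, and this is precisely the step where the hypothesis that $p$ is odd is used. Once that parity bookkeeping is handled, the proposition reduces to matching entries of $\varphi(\alpha)$ against the coefficients of $\alpha$.
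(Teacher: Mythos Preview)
Your proof is correct and proceeds along the same lines as the paper's: both arguments amount to matching the matrix entries of $\varphi$ against the coefficients of the algebraic integer in the basis given by (\ref{1.1}), treating $r=2,3$ as trivial and $r=1$ as the case requiring care. The only organizational difference is that you first invoke the $\mathbb{Z}$-linearity of (\ref{3.8})--(\ref{3.9}) to reduce to the single statement ``$\alpha\in p\mathcal{O}\Leftrightarrow\varphi(\alpha)\equiv 0\pmod p$'', whereas the paper works with the pair $\alpha_k,\alpha_m$ throughout and compares their coefficients directly; your packaging is slightly cleaner and you are more explicit about the parity bookkeeping (why $a/p\equiv b/p\pmod 2$ follows from $a\equiv b\pmod 2$ when $p$ is odd), a point the paper passes over quickly. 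One small remark: in your ``nontrivial direction'' the step from $p\mid\tfrac12(a+b)$ to $p\mid a+b$ does not actually require $p$ odd; the oddness of $p$ is genuinely needed only in the parity-preservation step and, in the reverse direction, to pass from $p\mid(a+b)$ to $p\mid\tfrac12(a+b)$.
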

\begin{proof}
 We prove only the more complicated case $r=1$. It can be proved in a similar way for $r=2,3$.

First we assume $\alpha_{k}\equiv \alpha_{m} \pmod p$ and we prove
$\varphi(\alpha_{k})\equiv \varphi(\alpha_{m}) \pmod p $. If $\alpha_{k}\equiv \alpha_{m} \pmod p$, this means
$$
\alpha_{k}=\frac{1}{2}\left(a_{k}+b_{k}\sqrt{d}\right), \ \alpha_{m}=\frac{1}{2}\left(a_{m}+b_{m}\sqrt{d}\right).
$$
Hence, $a_{k}\equiv a_{m}$ and $b_{k}\equiv b_{m} \pmod p$.
Then, we can write $a_{k}+b_{k}\equiv a_{m}+b_{m}$ and  $a_{k}-b_{k}\equiv a_{m}-b_{m}\pmod p$. From congruence properties we can write 
\begin{equation*}
\frac{1}{2} \left(a_{k}+b_{k}\right)\equiv \frac{1}{2}\left(a_{m}+b_{m}\right), \text{   }
\frac{1}{2} \left(a_{k}-b_{k}\right)\equiv \frac{1}{2}\left(a_{m}-b_{m}\right) \pmod p.
\end{equation*}
Then from (\ref{3.9}), $\varphi(\alpha_{k})\equiv \varphi(\alpha_{m}) \pmod p. $

Now we assume $\varphi(\alpha_{k})\equiv \varphi(\alpha_{m}) \pmod p$ and prove $\alpha_{k}\equiv \alpha_{m} \pmod p$. Using the definition in (\ref{3.9}) we can write
\begin{align*}  \varphi (\alpha_{k} ):=\begin{pmatrix} \tfrac{1}{2}
(a_{k}+b_{k})&b_{k} \\ qb_{k}&\tfrac{1}{2} (a_{k}-b_{k}) \end{pmatrix} \text{ }\end{align*}
and similary, $\varphi (\alpha_{m} ) $.
Then, for modulo $p,$ $b_{k}\equiv b_{m}$, $\frac{1}{2} \left(a_{k}+b_{k}\right)\equiv \frac{1}{2}\left(a_{m}+b_{m}\right) $ and 
$\frac{1}{2} \left(a_{k}-b_{k}\right)\equiv \frac{1}{2}\left(a_{m}-b_{m}\right)$. We can obtain $a_{k}\equiv a_{m}$ and this means 
$\alpha_{k}\equiv \alpha_{m}$.
\end{proof}
\begin{Pro}\label{2.xx}
If $p\nmid b$, $p\nmid d$ then $\alpha^{n}\equiv 1\pmod p$ if and only if $A^{n}\equiv I \pmod p$.
\end{Pro}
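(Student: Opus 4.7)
The plan is to derive this as an immediate corollary of the previous proposition together with the multiplicativity of $\varphi$. First I would note that $\varphi$, restricted to the multiplicative group of nonzero integers, is a homomorphism into GL$(2,\mathbb{Z})$, so $\varphi(\alpha^n) = \varphi(\alpha)^n = A^n$. Next I would record the trivial computation $\varphi(1) = I$, obtained by substituting $a = 1$, $b = 0$ into (\ref{3.8}) or (\ref{3.9}). Applying the previous proposition with $\alpha_k = \alpha^n$ and $\alpha_m = 1$ then yields $\alpha^n \equiv 1 \pmod p$ if and only if $\varphi(\alpha^n) \equiv \varphi(1) \pmod p$, i.e.\ $A^n \equiv I \pmod p$.

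For transparency I would also sketch the direct verification via the explicit formulas (\ref{3.6}) and (\ref{3.12}), both to double-check the previous step and to see where the hypotheses $p \nmid b$ and $p \nmid d$ actually enter. Comparing the off-diagonal entries of $A^n$ with those of $I$, the congruence $A^n \equiv I \pmod p$ forces $b\,u_{n-1}(x) \equiv 0$ from one off-diagonal, and either $bd\,u_{n-1}(x) \equiv 0$ (for $r = 2,3$) or $qb\,u_{n-1}(x) \equiv 0$ (for $r = 1$) from the other. Under $p \nmid b$ (together with $p \nmid d$ in the $r=2,3$ branch, to rule out any interference from the $d$ factor) these collapse to the single condition $u_{n-1}(x) \equiv 0 \pmod p$, and the diagonal entries then give $\tfrac{1}{2} t_n(x) \equiv 1 \pmod p$. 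Read off from (\ref{3.12}), this pair of congruences is precisely $\alpha^n \equiv 1 \pmod p$, so the two directions of the equivalence can also be seen componentwise.

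I do not anticipate a real obstacle: the proposition is a straightforward consequence of the preceding one once one observes that $\varphi$ is multiplicative and sends $1$ to $I$. The hypotheses $p \nmid b$ and $p \nmid d$ are best understood as ensuring that the off-diagonal entries one uses to extract the scalar congruences remain nontrivial modulo $p$, which is also what makes them the natural assumptions when this proposition is invoked in the later sections on $n(f)$ and $n(fp^k)$.
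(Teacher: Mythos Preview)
Your proposal is correct. Your primary argument---applying the previous proposition with $\alpha_k=\alpha^n$, $\alpha_m=1$ together with $\varphi(\alpha^n)=A^n$ and $\varphi(1)=I$---is genuinely different from the paper's route and in fact shows that the bare equivalence holds without the hypotheses $p\nmid b$, $p\nmid d$. The paper instead argues both directions by direct computation from the explicit formulas (\ref{3.4}), (\ref{3.6}) and (\ref{3.12}): assuming $\alpha^n\equiv 1$ it reads off $u_{n-1}(x)\equiv 0$ (using $p\nmid b$, $p\nmid d$) and $\tfrac{1}{2}t_n(x)\equiv 1$, then plugs into (\ref{3.4}) to get $A^n\equiv I$; conversely, from $A^n\equiv I$ it uses the off-diagonal entry $bu_{n-1}(x)\equiv 0$ in (\ref{3.6}) and $p\nmid b$ to force $u_{n-1}(x)\equiv 0$, hence $\tfrac{1}{2}t_n(x)\equiv 1$ and $\alpha^n\equiv 1$ via (\ref{3.12}). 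This is essentially your secondary ``direct verification'' sketch. The advantage of the paper's approach is that it explicitly isolates the intermediate conclusion $u_{n-1}(x)\equiv 0\pmod p$, which is the form actually invoked in the later sections (e.g.\ in the proof of Corollary~\ref{T5.5} and in the definitions of $q(p)$ and $n(f)$); your abstract argument, while cleaner for the stated equivalence, does not surface this by itself.
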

\begin{proof}
$ \Rightarrow$ First, for modulo $p$, we assume $\alpha^{n}\equiv 1$. For $r=2,3$,
$$\alpha^{n}=\frac{1}{2} t_{n} (x)+u_{n-1} (x)b\sqrt{d}\equiv 1 $$ 
with $p\nmid b$, $p\nmid d$ where $x$ is defined in (\ref{3.11}). 
Since $u_{n-1} (x)\equiv 0  $ by (\ref{3.12}), 
$\frac{1}{2} t_{n} (x)\equiv 1$. Hence, $ A^{n} =\frac{1}{2} t_{n} (x)I+u_{n-1}(x)(A-\tfrac{1}{2} xI)\equiv I$. For $r=1$ namely, 
$ \alpha^{n}=\frac{1}{2} t_{n} (x)+\tfrac{1}{2} u_{n-1} (x)b\sqrt{d} $ proof is similar.

$\Leftarrow$ We assume $ A^{n}\equiv I \pmod p$. Namely, for modulo $p$, 
$$ A^{n} =\frac{1}{2} t_{n} (x)I+u_{n-1}(x)(A-\tfrac{1}{2} xI)\equiv I$$ and 
we aim to prove $\alpha^{n}=\frac{1}{2} t_{n} (x)+u_{n-1} (x)b\sqrt{d}\equiv 1 $ for $r=2,3$. Since by (\ref{3.6}) $bu_{n-1}(x)\equiv 0\pmod{p}$, and as $b\not\equiv 0 $,
thus $ u_{n-1}(x)(A-\tfrac{1}{2} xI)\equiv 0$ and 
tr$(A-\tfrac{1}{2} xI)\equiv 0$ this means
$$
u_{n-1}(x)\begin{pmatrix} \ast & b\\ bd&\ast \end{pmatrix}\equiv 0.
$$
Thus, $u_{n-1}(x)b\equiv 0$. According to our assumption $b\not\equiv 0 \pmod p$ and $u_{n-1}(x)\equiv 0\pmod p$. Therefore from (\ref{3.6}), 
$\frac{1}{2} t_{n} (x)\equiv 1 \pmod p$, for the cases $r=2,3 $ and $r=1$, $\alpha^{n}\equiv 1\pmod p$.
\end{proof}
\section{Algebraic Integers with Norm $\neq 0 $}
In this section, we specialize the results of \cite{BiPom} to the quadratic field case, using the embedding discussed in Section $2$. We allow $d$ to be negative. 
Again we write $d=4q+r$ and $s=\text{Norm}(\alpha)$ where $\alpha$ is an integer of $\mathbb{Q}(\sqrt{d}$ $)$ as in (\ref{1.1}). 

Let $p$ be an odd prime. All the following congruences will be modulo $p$. In the next theorem, $\alpha$ is any algebraic integer with Norm $(\alpha)\neq 0 $ of the form 
(\ref{1.1}). We assume that $p\nmid d,\ p\nmid b$ and 
\begin{align}\label{3.x}
a^2-4s\not\equiv 0 \text{  if  } r=2,3, \text{  } a^{2}-s\not\equiv0 \text{ if } r=1.
\end{align}
Throughout the paper let $x$ be the trace defined by (\ref{3.11}) in the quadratic field case and $s$ be the norm of $\alpha$. Since $t_{n}$ and $u_{n}$ 
are polynomials with integer coefficients the identities in Section $6$ will become valid congruences. We always define $\ell$ as the Legendre symbol 
\begin{align} \label{4.2} \ell:=\left(\frac{x^2-4s}{p}\right)\end{align}
and write $p-\ell$ which is thus $=p\mp 1$ for $\ell=\pm 1$.
\begin{The}
 Let $p$ be an odd prime with $p\nmid d,\ p\nmid b$ and $s=\text{N}(\alpha)\neq 0$. Let $\ell$ be the Legendre symbol defined above. We set 
$\sigma=1$ for $\ell=+1$ and $\sigma=s$ for $\ell=-1$. Then
$$
t_{p-\ell}(x)=2\sigma,\  u_{p-\ell-1}(x)\equiv 0.
$$
We sum up the further results in the following table. 
\begin{center}
\renewcommand\arraystretch{2}

\begin{tabular}{c|c|c}
  & $r=2,3$ &$r=1$\\ \hline
\multirow{2}{*}{$ \left(\frac{s}{p}\right)=+1$ } & $t_{\frac{p-\ell}{2}}(2a)^2\equiv 4\sigma,$& $t_{\frac{p-\ell}{2}}(a)^2\equiv 4\sigma,$\\
&$u_{\frac{p-\ell}{2}-1}(2a)\equiv 0$ & $ u_{\frac{p-\ell}{2}-1}(a)\equiv 0$\\ \hline
\multirow{2}{*}{$\left(\frac{s}{p}\right)=-1$}&$ t_{\frac{p-\ell}{2}}(2a)\equiv 0,$&$ t_{\frac{p-\ell}{2}}(a)\equiv 0, $\\
& $ (a^2-4s)u_{\frac{p-\ell}{2}-1}(2a)^2\equiv 4\sigma $& $(a^2-s)u_{\frac{p-\ell}{2}-1}(a)^2\equiv \sigma   $\\
\end{tabular}
\end{center}
\end{The}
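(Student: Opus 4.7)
The strategy is to pass from $\alpha$ to the matrix $A = \varphi(\alpha) \in \mathrm{GL}(2,\mathbb{Z})$ with trace $x$ and determinant $s$, compute $A^{p-\ell} \pmod{p}$ via its eigenvalues, and then read the resulting congruences for $t_{p-\ell}(x)$ and $u_{p-\ell-1}(x)$ off the entry formula (\ref{3.6}). Condition (\ref{3.x}) asserts that the discriminant $x^2 - 4s$ of the characteristic polynomial $z^2 - x z + s$ is nonzero mod $p$, so $A$ has two distinct eigenvalues $\lambda_1,\lambda_2 \in \overline{\mathbb{F}_p}$, and the Legendre symbol $\ell$ records whether they lie in $\mathbb{F}_p$ (when $\ell = +1$) or in $\mathbb{F}_{p^2}\setminus\mathbb{F}_p$ (when $\ell = -1$). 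In the first case, Fermat's little theorem gives $\lambda_i^{p-1}\equiv 1$, hence $A^{p-1}\equiv I\pmod p$. In the second, the Frobenius of $\mathbb{F}_{p^2}/\mathbb{F}_p$ swaps $\lambda_1$ and $\lambda_2$, so $\lambda_1^p=\lambda_2$ and therefore $\lambda_1^{p+1}=\lambda_1\lambda_2=s$; diagonalizing over $\mathbb{F}_{p^2}$ gives $A^{p+1}\equiv sI\pmod p$. In both cases this reads $A^{p-\ell}\equiv\sigma I\pmod p$.

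Next I would compare this with (\ref{3.6}). The off-diagonal entries of $A^n$ are nonzero multiples of $u_{n-1}(x)$ by factors involving $b$ (namely $b$ and $bd$ when $r=2,3$, or $b$ and $qb$ when $r=1$), and the hypothesis $p\nmid b$ (together with $p\nmid d$ where needed) makes at least one of these factors invertible mod $p$. Hence $A^{p-\ell}\equiv\sigma I$ forces $u_{p-\ell-1}(x)\equiv 0$, and the diagonal then yields $\tfrac12 t_{p-\ell}(x)\equiv\sigma$, i.e.\ $t_{p-\ell}(x)\equiv 2\sigma\pmod p$.

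For the table I would set $m=(p-\ell)/2$ and invoke the duplication identities
\begin{align*}
t_{2m}(x) &= t_m(x)^2 - 2 s^m,\\
u_{2m-1}(x) &= t_m(x)\, u_{m-1}(x),
\end{align*}
together with the Pell-type identity $(x^2-4s)\,u_{m-1}(x)^2 = t_m(x)^2 - 4 s^m$, all of which are recorded in Section~6. By Euler's criterion $s^{(p-1)/2}\equiv\left(\tfrac{s}{p}\right)\pmod p$, I would evaluate $s^m$ in each of the four sign combinations of $\ell$ and $\left(\tfrac{s}{p}\right)$. The first identity then gives $t_m(x)^2\equiv 2\sigma+2s^m$, which simplifies to $4\sigma$ when $\left(\tfrac{s}{p}\right)=+1$ and to $0$ when $\left(\tfrac{s}{p}\right)=-1$. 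In the first sub-case $t_m(x)\not\equiv 0$ (note $\sigma\not\equiv 0$ since $p\nmid s$), so the second identity combined with $u_{p-\ell-1}(x)\equiv 0$ forces $u_{m-1}(x)\equiv 0$. In the second sub-case the Pell-type identity gives $(x^2-4s)\,u_{m-1}(x)^2\equiv -4s^m$, which evaluates to $4\sigma$ in both remaining combinations. Specializing $x=2a$ for $r=2,3$ and $x=a$ for $r=1$ and rewriting $x^2-4s$ in terms of $a$ and $s$ then yields the entries displayed in the table.

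The main obstacle is the identification $A^{p-\ell}\equiv\sigma I\pmod p$ when $\ell=-1$: it requires the brief Galois/Frobenius argument over $\mathbb{F}_{p^2}$ rather than Fermat's little theorem alone. Once this step is in place, the remainder is bookkeeping with the Chebyshev-type identities from Section~6 and Euler's criterion.
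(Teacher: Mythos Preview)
The paper does not give its own proof here; it simply records that this is Theorem~4.1 of \cite{BiPom} specialized to the quadratic-field setting and notes that the argument there ``used Chebyshev polynomials.'' Your proof is correct but takes a somewhat different route for the first assertion. The Chebyshev-only approach of \cite{BiPom} would obtain $t_{p-\ell}(x)\equiv 2\sigma$ and $u_{p-\ell-1}(x)\equiv 0$ directly from the explicit expansion (\ref{2.4}) (all interior binomial coefficients are divisible by $p$, so $u_{p-1}(x)$ collapses to $(x^2-4s)^{(p-1)/2}$ and Euler's criterion finishes), without ever leaving $\mathbb{Z}$. You instead diagonalize $A=\varphi(\alpha)$ over $\mathbb{F}_{p^2}$, use Fermat/Frobenius to obtain $A^{p-\ell}\equiv\sigma I$, and then read the Chebyshev congruences off the entry formula (\ref{3.6}); this is more conceptual and makes the appearance of $p-\ell$ and of $\sigma\in\{1,s\}$ transparent. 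For the table the two approaches necessarily converge: you combine the duplication identities (\ref{6.2x}), (\ref{6.3x}) and the $m=2$ case of (\ref{6.4x}) with Euler's criterion for $s^{(p-\ell)/2}$, exactly as any proof must. One small caveat: your diagonalization step silently uses $p\nmid s$ (so that $A\bmod p$ is invertible); this is implicit in the statement through the table's case split on $\bigl(\tfrac{s}{p}\bigr)=\pm1$, but is worth making explicit.
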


This is \cite[Th.$4.1$]{BiPom} specialized to our present situation. 

The proof in \cite{BiPom} used Chebyshev polynomials. In the present context of quadratic fields, many of the above formulas can be proved by other methods, see 
for instance \cite{Ba03}, \cite[Th.$1.7$ ]{Le30}.
\section{Algebraic Integers with Norm $\pm1 $}
First we consider the case that $s=\text{Norm}(\alpha)=+1$. All congruences will be modulo the odd prime $p$. Also $\ell$ is the Legendre symbol defined in 
(\ref{4.2}) and $x$ is defined in (\ref{3.11}).

The following results are obtained by specializing the results in Section $5$ and $6$ of \cite{BiPom}. The Legendre polynomials $t_{n}$ and $u_{n-1}$ depend only 
on $x$ and $s$ as defined in (\ref{3.10}) and (\ref{3.11}); the specific form (\ref{1.1}) of $\alpha$ is not important.
\begin{Pro} \label{T5.1 } Let $ k\in \mathbb{N} $  divide $ p-\ell$  and we assume that $ \ell=(\tfrac{x^2 -4s}{p})\neq 0$. If $ x\equiv t_{k} (y)$  for
some $ y\in \mathbb{Z} $  then, with $ n=\frac{p-\ell}{k}$ ,
\begin{align} \label{5.1} t_{n} (x)\equiv 2,\ \ u_{n-1} (x)\equiv 0,\ \
\alpha^{n} \equiv 1.\end{align}
\end{Pro}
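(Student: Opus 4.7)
My plan is to leverage the composition and product identities for the adapted Chebyshev polynomials (which, for $s=1$, reduce to the classical $T_m(T_n)=T_{mn}$ and $U_{mn-1}(z)=U_{m-1}(T_n(z))U_{n-1}(z)$) to push the problem from $x$ back to $y$, and then invoke the previous theorem.

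First I would record the identities I need for $s=1$:
\begin{align*}
t_{nk}(y) &= t_n(t_k(y)), \qquad u_{nk-1}(y) = u_{n-1}(t_k(y))\,u_{k-1}(y),\\
t_k(y)^2 - 4 &= (y^2-4)\,u_{k-1}(y)^2.
\end{align*}
These are polynomial identities (established in Section 6 of the paper), so they hold as congruences modulo $p$. Substituting the hypothesis $x \equiv t_k(y)\pmod p$ gives $t_n(x) \equiv t_{nk}(y) = t_{p-\ell}(y)$ and $u_{n-1}(x)\,u_{k-1}(y) \equiv u_{p-\ell-1}(y)$.

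Next, from the discriminant identity $x^2-4 \equiv (y^2-4)\,u_{k-1}(y)^2\pmod p$, I would argue that $u_{k-1}(y)\not\equiv 0$: otherwise $x^2\equiv 4$, forcing $\ell=0$ and contradicting the hypothesis $\ell\neq 0$. Since $u_{k-1}(y)^2$ is then a nonzero square mod $p$, the Legendre symbol of $y^2-4$ equals that of $x^2-4$, namely $\ell$. Applying the preceding theorem to the pair $(y,s=1)$ — for which $\sigma=1$ in both cases $\ell=\pm 1$ — yields $t_{p-\ell}(y)\equiv 2$ and $u_{p-\ell-1}(y)\equiv 0$. Therefore $t_n(x)\equiv 2$, and cancelling the unit $u_{k-1}(y)$ in $u_{n-1}(x)\,u_{k-1}(y)\equiv 0$ gives $u_{n-1}(x)\equiv 0$.

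Finally, plugging $t_n(x)\equiv 2$ and $u_{n-1}(x)\equiv 0$ into formula (\ref{3.12}) in both cases $r=2,3$ and $r=1$ produces $\alpha^n\equiv \tfrac12\cdot 2 + 0 \equiv 1\pmod p$, establishing the three conclusions of (\ref{5.1}).

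The only potential obstacle is the verification that the Legendre symbol for $y$ coincides with $\ell$; once the product identity $x^2-4\equiv (y^2-4)u_{k-1}(y)^2$ is in hand, this is routine, and the non-vanishing of $u_{k-1}(y)$ is what actually legitimises the cancellation that produces $u_{n-1}(x)\equiv 0$. Everything else is a direct specialisation of the previous theorem through the composition formula.
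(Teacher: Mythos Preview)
The paper does not supply an argument here; it simply cites \cite[Th.\,5.1]{BiPom}. Your proof is correct and self-contained, and it is built exactly from the ingredients the present paper makes available: the composition identity \eqref{2.12}, the product identity \eqref{6.4x}, and the discriminant relation \eqref{6.2x}, combined with the polynomial congruences $t_{p-\ell}\equiv 2$, $u_{p-\ell-1}\equiv 0$ from Theorem~3.1 for $s=1$. This is almost certainly the argument in \cite{BiPom} as well, since those are the same tools that paper develops.

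One cosmetic point: when you ``apply the preceding theorem to the pair $(y,s=1)$'' you are really invoking the purely polynomial statement that underlies Theorem~3.1 (namely \cite[Th.\,4.1]{BiPom}), since Theorem~3.1 as stated in this paper carries the hypotheses $p\nmid d$, $p\nmid b$ attached to an algebraic integer $\alpha$. The paper itself remarks just before Proposition~4.1 that ``the specific form \eqref{1.1} of $\alpha$ is not important'', so the step is legitimate; you may want to phrase the citation accordingly. Your deduction of $\alpha^{n}\equiv 1$ directly from \eqref{3.12} is a clean alternative to the route via $A^{n}\equiv I$ and Proposition~\ref{2.xx} that the paper uses in the analogous Corollary~4.3.
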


See \cite[Th. $5.1$]{BiPom}.

For the special case that $k=2^j$ we can say much more. We construct $ x_{0} ,\dots,x_{m}$ 
recursively by the following rule. Let $ x_{0} =x$. If $
(\frac{x+2}{p})=-1$ 
then we set $ m=0$  and stop.
\ 
Now let $ (\frac{x+2}{p})=+1$  and suppose that $ x_{0} ,\dots,x_{k}$  have
already been
constructed such that $ 2^{k} |p-\ell$  and
\begin{align} \label{5.4} x_{\nu -1}\equiv t_{2} (x_{\nu } ),\ ((x_{\nu}^2 -4)/p)=\ell
\text{\ \ \ for } 1\leq \nu \leq k.\end{align}
If $ 2^{k+1} \ \nmid\ p-\ell$  or if $ (\frac{x_{k}+2}{p})$ $ =-1$  then we set $
m=k$  and stop. 
Otherwise we have $ 2^{k+1} |p-\ell$  and $ (\frac{x_{k}+2}{p})=+1$. Then there  
exists $ x_{k+1}$  such that $ x_{k} +2\equiv x^2 _{k+1}$  and thus $ x_{k}
=t_{2} (x_{k+1} ).$ 
It follows from (\ref{5.4})  that
\begin{align*}
((x_{k} -2)/p)=((x_{k} +2)/p)((x_{k} -2)/p)=((x^2 _{k} -4)/p)=\ell\end{align*}
and therefore $ ((x^2 _{k+1} -4)/p)=((x_{k}-2)/p)=\ell$ . This completes our
construction. Note that $ 2^{m} \ |\ p-\ell.$ 
\begin{The}
 Let N$(\alpha)=1$ and $ \ell=(\tfrac{x^2 -4}{p})\neq 0$
and let $ x_{0} ,\dots,x_{m} $ be as
constructed above. Then
\begin{align} \label{5.5} t_{(p-\ell)/2^k}(x)\equiv 2 \text{\ \ \ for }
k=0,\dots,m,\end{align}
\begin{align} \label{5.6} t_{(p-\ell)/2^{m+1}}(x) \equiv -2 \text{\ \ or\ \ }
2^{m+1} \ \nmid\  p-\ell.\end{align}
\end{The}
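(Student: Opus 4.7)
The plan is to prove both (\ref{5.5}) and (\ref{5.6}) using two identities for the adapted Chebyshev polynomials from Section~6: the composition law $t_a(t_b(z))=t_{ab}(z)$ (valid when $s=1$) and the doubling law $t_{2n}(z)=t_n(z)^2-2$.

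For (\ref{5.5}), I would first show inductively that $x\equiv t_{2^k}(x_k)\pmod p$ for each $0\leq k\leq m$. The base case $k=0$ is immediate since $x_0=x$ and $t_1$ is the identity. For the inductive step the construction supplies $x_k\equiv t_2(x_{k+1})$, so composition yields $t_{2^k}(x_k)\equiv t_{2^k}(t_2(x_{k+1}))=t_{2^{k+1}}(x_{k+1})$. Since $2^k\mid p-\ell$ by construction, Proposition~\ref{T5.1 } applied with $(2^k,x_k)$ in place of $(k,y)$ then gives $t_{(p-\ell)/2^k}(x)\equiv 2$.

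For (\ref{5.6}), assume $2^{m+1}\mid p-\ell$. The doubling law with $n=(p-\ell)/2^{m+1}$ together with (\ref{5.5}) at $k=m$ gives $t_{(p-\ell)/2^{m+1}}(x)^2\equiv 4$, hence $t_{(p-\ell)/2^{m+1}}(x)\equiv\pm 2$, and composition with $x\equiv t_{2^m}(x_m)$ rewrites this as $t_{(p-\ell)/2}(x_m)\equiv\pm 2$. The remaining task is to rule out $+2$. I would pass to the cyclic multiplicative group $G$ of norm-one elements of $(\mathcal{O}/p\mathcal{O})^\times$ (of order $p-\ell$, sitting inside $\mathbb{F}_p\times\mathbb{F}_p$ or $\mathbb{F}_{p^2}$ according as $p$ splits or is inert), pick $\alpha_m\in G$ with $\alpha_m+\alpha_m^{-1}=x_m$ (possible because $\bigl(\tfrac{x_m^2-4}{p}\bigr)=\ell\neq 0$), and observe that $t_{(p-\ell)/2}(x_m)=\alpha_m^{(p-\ell)/2}+\alpha_m^{-(p-\ell)/2}$ equals $+2$ if $\alpha_m\in G^2$ and $-2$ otherwise.

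The heart of the argument is then the equivalence: $\alpha_m\in G^2$ if and only if $x_m+2$ is a square modulo $p$. The forward direction is immediate, since $\alpha_m=\beta^2$ with $\beta\in G$ gives $x_m+2=(\beta+\beta^{-1})^2$ with $\beta+\beta^{-1}\in\mathbb{F}_p$. For the converse, if $x_m+2\equiv w^2$ I would take $\beta$ a root of $T^2-wT+1$; its discriminant is $x_m-2$, and since the product $(x_m-2)(x_m+2)$ has Legendre symbol $\ell$ while $x_m+2$ is a square, $\beta$ lies in the correct quadratic extension with $N(\beta)=1$, so $\beta\in G$ and $\beta^2=\alpha_m^{\pm 1}$. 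Because the construction halted with $\bigl(\tfrac{x_m+2}{p}\bigr)=-1$, $\alpha_m$ is a non-square in $G$ and $t_{(p-\ell)/2}(x_m)\equiv -2$. The main obstacle is precisely this equivalence: verifying that $\beta$ lives in the correct quadratic extension uniformly across the split and inert cases and ruling out the degenerate $x_m\equiv -2$ (which would force $\alpha_m\equiv -1$ but is incompatible with the hypothesis $\bigl(\tfrac{x_m+2}{p}\bigr)=-1$).
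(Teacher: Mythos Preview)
The paper does not give a self-contained proof of this theorem: it simply cites \cite[Th.~4.3]{BiPom}. Your proposal therefore supplies an argument where the paper provides none, and the argument you outline is correct.

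Your route for (\ref{5.5}) is exactly the intended one: iterate the construction to get $x\equiv t_{2^k}(x_k)$ and then invoke Proposition~\ref{T5.1 } with $2^k$ in place of $k$. For (\ref{5.6}) your reduction to deciding whether $\alpha_m$ is a square in the cyclic norm-one group $G$ of order $p-\ell$ is sound, and the key equivalence ``$\alpha_m\in G^2\iff\bigl(\tfrac{x_m+2}{p}\bigr)=+1$'' goes through as you describe. The case split you flag as the main obstacle is harmless: when $\ell=+1$ the product $(x_m-2)(x_m+2)$ is a square and $x_m+2$ is assumed a square, so $x_m-2$ is a square and $\beta\in\mathbb{F}_p^\times=G$; when $\ell=-1$ the same product is a non-square, so $x_m-2$ is a non-square, $\beta\in\mathbb{F}_{p^2}\setminus\mathbb{F}_p$, and then $N(\beta)=\beta\bar\beta$ equals the constant term $1$. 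The degenerate value $x_m\equiv-2$ is excluded because it would make $\bigl(\tfrac{x_m+2}{p}\bigr)=0$, not $-1$, while $x_m\equiv 2$ is excluded by $\ell\neq 0$.

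In short: your proof is complete and more explicit than what the paper itself offers; the only thing to tidy is to state the split/inert dichotomy once at the outset rather than treating it as an afterthought.
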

See \cite[Th. $4.3$]{BiPom}.
\begin{Cor}\label{T5.5}  Let $s=\text{N}(\alpha)=1$  and $ \ell=(\tfrac{x^2 -4}{p})\neq 0$
and let $ x_{0} ,\dots,x_{m}$  be
constructed as above. Writing $ n=(p-\ell)/2^{m}$  we have
\begin{align} \label{5.8} u_{n-1} (x)\equiv 0,\ \alpha^{n} \equiv 1.\end{align}
If $ 2^{m+1} \ |\ p-\ell$  then furthermore
\begin{align} \label{5.9} u_{\frac{n}{2}-1} (x)\equiv 0,\ \alpha^{n/2} \equiv
-1.\end{align}
These bounds are best possible: If $2^{m+2}\mid p-\ell$ then $u_{\frac{n}{2}-1} (x)\not\equiv 0$.

\end{Cor}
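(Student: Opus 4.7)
I plan to prove the three assertions sequentially, leaning on Proposition~\ref{T5.1 }, the preceding theorem, and the identities for $t_n,u_n$ collected in Section~6.

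For the first, the idea is to iterate the defining relation $x_{\nu-1}\equiv t_2(x_\nu)\pmod{p}$ for $\nu=1,\dots,m$. The Chebyshev composition law $t_a(t_b(y))=t_{ab}(y)$ (which reduces, via (\ref{2.1}), to the classical $T_a\circ T_b=T_{ab}$ since $s=1$ makes all power-of-$s$ prefactors trivial) gives $x\equiv t_{2^m}(x_m)\pmod{p}$. Since $2^m\mid p-\ell$ by construction, Proposition~\ref{T5.1 } with $k=2^m$ and $y=x_m$ delivers $u_{n-1}(x)\equiv 0$ and $\alpha^n\equiv 1$.

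For the second, assume additionally that $2^{m+1}\mid p-\ell$. Then the second alternative in~(\ref{5.6}) is excluded, so $t_{n/2}(x)\equiv -2\pmod{p}$. The half-index identity
\begin{align*} u_{2k-1}(x)=t_k(x)\,u_{k-1}(x), \end{align*}
a restatement of $\sin(2k\theta)=2\sin(k\theta)\cos(k\theta)$ under the normalizations (\ref{2.1}),~(\ref{2.2}), with $k=n/2$ reads $u_{n-1}(x)=t_{n/2}(x)\,u_{n/2-1}(x)$. Together with $u_{n-1}(x)\equiv 0$ and $-2\not\equiv 0\pmod{p}$ (odd $p$) this forces $u_{n/2-1}(x)\equiv 0$, and then (\ref{3.12}) yields $\alpha^{n/2}\equiv\tfrac12 t_{n/2}(x)\equiv -1$.

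For the best-possible clause, which I read as $u_{n/4-1}(x)\not\equiv 0$ under $2^{m+2}\mid p-\ell$ (the printed $u_{n/2-1}$ appears to be a typo, since otherwise it directly contradicts~(\ref{5.9})), I would argue by contradiction. Suppose $u_{n/4-1}(x)\equiv 0$. Then~(\ref{3.12}) gives $\alpha^{n/4}\equiv\tfrac12 t_{n/4}(x)=:c\in\Z$. Conjugating, $\overline{\alpha^{n/4}}\equiv c$ as well, so $c^2\equiv\alpha^{n/4}\,\overline{\alpha^{n/4}}=\mathrm{N}(\alpha)^{n/4}=1\pmod{p}$, giving $c\equiv\pm 1$. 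Squaring yields $\alpha^{n/2}\equiv 1$, which contradicts $\alpha^{n/2}\equiv -1$ established above. The main obstacle, as I see it, is this last step: verifying that the apparent typo is indeed such, and formulating the right contradiction from the norm-one constraint. The Chebyshev bookkeeping itself is routine once the two identities from Section~6 are in hand.
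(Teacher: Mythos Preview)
Your argument is correct, and you rightly flag the typo in the ``best possible'' clause (the paper's own proof indeed concludes $u_{n/4-1}(x)\not\equiv 0$). The overall shape matches the paper, but the individual steps differ in interesting ways.

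For (\ref{5.8}) the paper does not invoke Proposition~\ref{T5.1 }. Instead it reads off $t_n(x)\equiv 2$ directly from (\ref{5.5}) of the preceding theorem and then uses the identity $(x^2-4)u_{n-1}(x)^2=t_n(x)^2-4$ together with $x^2-4\not\equiv 0$ to force $u_{n-1}(x)\equiv 0$; the conclusion $\alpha^n\equiv 1$ is then obtained via (\ref{3.4}) and Proposition~\ref{2.xx}. Your route through the composition $x\equiv t_{2^m}(x_m)$ and Proposition~\ref{T5.1 } is equally valid and arguably cleaner, since it delivers all three conclusions of (\ref{5.1}) in one stroke.

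For (\ref{5.9}) the paper again appeals to the identity $(x^2-4)u_{n/2-1}^2=t_{n/2}^2-4$ rather than your product formula $u_{n-1}=t_{n/2}\,u_{n/2-1}$; both are immediate consequences of the material in Section~6 (your identity is the case $m=2$ of (\ref{6.4x})).

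The genuine divergence is in the ``best possible'' part. The paper argues purely with Chebyshev identities: from $t_{n/2}(x)\equiv -2$ and $t_{n/2}=t_{n/4}^2-2$ (the $s=1$ case of (\ref{6.3x})) it gets $t_{n/4}(x)\equiv 0$, whence $(x^2-4)u_{n/4-1}^2\equiv -4\not\equiv 0$. Your norm--conjugation contradiction is a different and perfectly good alternative; it trades a Chebyshev computation for the arithmetic fact $\mathrm{N}(\alpha)=1$, and has the mild advantage of making transparent \emph{why} halving the exponent once more must fail.
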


\begin{proof}
 Since $s=1$ and for modulo p, $x^2 -4\not\equiv 0$ it follows from (\ref{2.4}) and (\ref{5.5}) that $u_{n-1}\equiv 0$  and therefore $A^{n}\equiv I$ by 
(\ref{3.4}). By Proposition \ref{2.xx} we have $\alpha^{n}\equiv 1.$ This proves (\ref{5.8}). Furthermore if $ 2^{m+1} \ |\ p-\ell$ then (\ref{5.9}) follows from (\ref{5.6})
 by the same argument. Now let $2^{m+2}\mid p-\ell$. Then it follows from (\ref{5.3}) that $t_{n/2}(x)\equiv -2$ so that $t_{n/4}(x)\equiv 0$ by recursion formulas of 
$t_n(x)$ which is similar to $u_{n}(x)$ in Section $6$. Hence $u_{\frac{n}{4}-1}(x)\not\equiv 0$.
\end{proof}
Now we consider the more complicated case that N$(\alpha)=-1$ so that in general $t_{n}(x)=t_{n}(x;-1)$. As before we set $ \ell:=\left(\frac{x^2-4s}{p}\right)$ 
and assume that (\ref{3.x}) with $s=-1$ holds. All congruences will be modulo the odd prime $p$.
Since $ (-1/p)=(-1)^{(p-1)/2}$ we obtain from Theorem $3.1$ (where now $\sigma=\ell$)
that,
with $ n=\frac{p-\ell}{2}$ ,
\begin{align} \label{6.1} t_{2n} (x)\equiv 2\ell,\ t_{n} (x)^2 \equiv 4\ell,\
u_{n-1} (x)\equiv 0 \text{\ \ \ for } p=4q+1,\end{align}
\begin{align} \label{6.2} t_{2n} (x)\equiv 2\ell,\ t_{n} (x)\equiv 0,\ u_{n-1}
(x)\not\equiv 0 \text{\ \ \ for } p=4q+3,\end{align}
and it follows from (\ref{6.3x})  that
\begin{align} \label{6.3} t_{2(p-\ell)} (x)\equiv 2.\end{align}
Hence $ t_{n} (x)\equiv \pm 2$  holds if and only if $ p=4q+1$  and $ \ell=+1$
, which we now assume.
\ 
It follows from (\ref{2.12}) and $ t_{2} (x;-1)=x^2 +2$  that
\begin{align} \label{6.4} t_{2n} (x;-1)=t_{n} (x^2 +2;1) \text{\ \ \ for }
n\in \mathbb{N} .\end{align}
Since $ (\frac{-1}{p})=+1$  there exists $ j\in \mathbb{Z} $  with $ j^2
\equiv -1$ . We now assume that $ x\not\equiv 0$ 
and $ x\not\equiv \pm 2j.$  Then
\begin{align} \label{6.5} (x^2 +2)^2 -4=x^2 (x^2 +4)\not\equiv 0.\end{align}
As in section $4$, we construct numbers $ y_{0} ,\dots,y_{m}$  with the only
difference that $ y_{0} =x^2 +2$  instead of $ x_{0} =x$ . It follows from
(\ref{6.5})  that
also $ ((y^2 _{0} -4)/p)=\ell.$  We have $ y_{0} +2=x^2 +4$  and thus $ ((y_{0}
+2)/p)=\ell=+1.$ 
Hence the first step of our construction can 
be done so that $ m\geq 1$. The construction stops if $ ((y_{m} +2)/p)=-1$ 
or $ 2^{m+1} \ \nmid\ p-1.$ 
\begin{The}\label{T6.1} Let N$(\alpha)=-1,p=4q+1,\ a^2 +4\not\equiv 0$, $\ell=+1$ and 
let $ y_{0} ,\dots,y_{m}$  be
constructed as above. Then $ m\geq 1$  and
\begin{align} \label{6.6} t_{(p-1)/2^{k}}(x)\equiv 2\ \ \  \text{for }
k=0,\dots,m-1,\end{align}
\begin{equation}
t_{(p-1)/2^{m}}(x)\equiv \left\{
\begin{array}{ll}
 \displaystyle -2,\ \text{or}\\
\displaystyle 0 \ \text{and}\ 2^{m+1}\nmid p-\ell.
\end{array}
\right.
\end{equation}
\end{The}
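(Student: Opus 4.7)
The plan is to deduce Theorem \ref{T6.1} by transporting the $s=+1$ result (the unnamed theorem giving (\ref{5.5}) and (\ref{5.6})) from $y_0 = x^2+2$ back to $x$ via the key identity (\ref{6.4}), $t_{2n}(x;-1) = t_n(x^2+2;1)$. The inequality $m\geq 1$ is already established in the paragraph preceding the theorem: since $((y_0+2)/p) = ((x^2+4)/p) = \ell = +1$ and $2\mid p-1$ (because $p = 4q+1$), the first step of the construction always succeeds.

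Next I observe that $y_0,\dots,y_m$ was built by precisely the Section~$4$ recipe, only with starting value $y_0$, norm $s=+1$, and Legendre value $((y_0^2-4)/p)=\ell=+1$. Hence the $s=+1$ theorem applies verbatim to $y_0$ and yields
\begin{equation*}
t_{(p-1)/2^{k}}(y_0;1)\equiv 2\quad (k=0,\dots,m),
\end{equation*}
together with the level-$(m{+}1)$ alternative: either $t_{(p-1)/2^{m+1}}(y_0;1)\equiv -2$ or $2^{m+1}\nmid p-1$. For each $k=1,\dots,m$, identity (\ref{6.4}) rewrites $t_{(p-1)/2^{k}}(y_0;1) = t_{(p-1)/2^{k-1}}(x;-1)$, so after reindexing $j=k-1$ this is exactly (\ref{6.6}).

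For the remaining claim I split on the $2$-adic divisibility of $p-1$. If $2^{m+1}\mid p-1$, the level-$(m{+}1)$ alternative forces $t_{(p-1)/2^{m+1}}(y_0;1)\equiv -2$, and one more application of (\ref{6.4}) rewrites this as $t_{(p-1)/2^m}(x;-1)\equiv -2$, giving the first case of the theorem's final display. If instead $2^{m+1}\nmid p-1$, then $n_0:=(p-1)/2^m$ is odd, and the doubling formula $t_{2n}(x;s)=t_n(x;s)^2-2s^n$ (a direct consequence of $T_{2n}=2T_n^2-1$ via (\ref{2.1}), and part of the Chebyshev identities of Section~$6$) specialises to $t_{2n_0}(x;-1)=t_{n_0}(x;-1)^2+2$. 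The $j=m-1$ case of (\ref{6.6}), legitimate because $m\geq 1$, supplies $t_{2n_0}(x;-1)=t_{(p-1)/2^{m-1}}(x;-1)\equiv 2$, so $t_{n_0}(x;-1)^2\equiv 0$, i.e.\ $t_{(p-1)/2^m}(x;-1)\equiv 0$, the second case.

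The main obstacle is really only the index bookkeeping between the two parametrisations: (\ref{6.4}) halves the exponent, so one power of $2$ is consumed by the substitution from $s=-1$ to $s=+1$, and the odd-residual edge case, where no further halving is available, must be handled separately by the doubling recursion --- which is precisely why the second alternative of the theorem reads $t\equiv 0$ rather than $t\equiv -2$.
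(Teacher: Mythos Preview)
Your proof is correct and follows exactly the line the paper has set up: the identity (\ref{6.4}) and the $y$-construction are introduced precisely so that Theorem~4.2 (the $s=+1$ result) can be applied to $y_0=x^2+2$ and then pulled back to $x$, which is what you do. The paper itself gives no proof here, only the citation ``See \cite[Th.~6.1]{BiPom}''; your argument is presumably the one intended in that reference, since all the ingredients (the identity (\ref{6.4}), the verification $((y_0^2-4)/p)=\ell$, and the observation $m\ge 1$) are laid out in the paragraph immediately preceding the theorem for exactly this purpose.
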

See \cite[Th.6.1]{BiPom}. The next result is not a surprise because N$(\alpha^2)=1$. Its proof is similar to the proof of Corollary $4.3$.
\begin{Cor}
 Under the assumptions of Theorem $4.4$, we now write $n=(p-\ell)/2^{m-1}$. Then (\ref{5.8}) holds, and if $2^{m+1}\mid p-\ell$ then (\ref{5.9}) also holds. These 
bounds are best possible: If  $2^{m+1}\mid p-\ell$ then $u_{\frac{n}{4}-1}(x)\not\equiv 0.$
\end{Cor}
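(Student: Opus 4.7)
My plan is to imitate the proof of Corollary \ref{T5.5}, now with $s=-1$, feeding in Theorem \ref{T6.1} wherever the $s=+1$ version appealed to its analogue. Equivalently, since $\mathrm{N}(\alpha^2)=+1$ and $\alpha^2$ has trace $y_0=x^2+2$, the $y$-construction preceding Theorem \ref{T6.1} is literally the construction of Corollary \ref{T5.5} applied to $\beta:=\alpha^2$, so one may invoke that corollary for $\beta$ and translate via $\alpha^{2k}=\beta^k$.

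For (\ref{5.8}): (\ref{6.6}) with $k=m-1$ reads $t_n(x)\equiv 2$; since the $y$-construction forces $2^m\mid p-1$, the exponent $n=(p-1)/2^{m-1}$ is even, hence $(-1)^n=1$. The fundamental Chebyshev identity from Section 6,
\begin{equation*}
t_N(x;-1)^2-(x^2+4)\,u_{N-1}(x;-1)^2=4(-1)^N,
\end{equation*}
taken at $N=n$, gives $(x^2+4)\,u_{n-1}(x)^2\equiv 0$, and the standing hypothesis $x^2+4\not\equiv 0$ forces $u_{n-1}(x)\equiv 0$. Combining this with $t_n(x)\equiv 2$ in (\ref{3.4}) yields $A^n\equiv I\pmod p$, and Proposition \ref{2.xx} converts this into $\alpha^n\equiv 1$.

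If in addition $2^{m+1}\mid p-1$, the ``$\equiv 0$'' alternative of Theorem \ref{T6.1} is ruled out, so $t_{n/2}(x)\equiv -2$. Because $n/2$ is still even, the same identity at $N=n/2$ produces $u_{n/2-1}(x)\equiv 0$, whence $A^{n/2}\equiv -I$ and, by Proposition \ref{2.xx} applied against $-1$, $\alpha^{n/2}\equiv -1$; this is (\ref{5.9}).

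For the best-possible claim I expect to mirror the closing paragraph of the proof of Corollary \ref{T5.5}. Under a divisibility assumption strong enough that $n/4$ is an \emph{even} integer, the composition identity reads $t_{n/2}(x;-1)=t_{n/4}(x;-1)^2-2$, and $t_{n/2}(x)\equiv -2$ then pins $t_{n/4}(x)\equiv 0$; plugging $N=n/4$ into the Chebyshev identity gives $(x^2+4)\,u_{n/4-1}(x)^2\equiv -4$, whence $u_{n/4-1}(x)\not\equiv 0$. I expect the main obstacle to lie precisely in this parity control: if $n/4$ were odd the composition law instead reads $t_{n/2}=t_{n/4}^2+2$ and the sign on the right of the Chebyshev identity flips, so the argument collapses and a genuinely stronger power-of-$2$ divisibility (the exact analogue of $2^{m+2}\mid p-\ell$ used in Corollary \ref{T5.5}) is needed.
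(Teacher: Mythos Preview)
Your approach is precisely what the paper does: it gives no separate proof but only the remark ``The next result is not a surprise because $\mathrm{N}(\alpha^{2})=1$. Its proof is similar to the proof of Corollary~\ref{T5.5}.'' Both of your routes---mimicking the argument of Corollary~\ref{T5.5} with $s=-1$ and Theorem~\ref{T6.1} in place of Theorem~4.2, or applying Corollary~\ref{T5.5} directly to $\beta=\alpha^{2}$ with trace $y_{0}=x^{2}+2$---are exactly the two readings of that remark, and your execution of (\ref{5.8}) and (\ref{5.9}) is correct.

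Your caveat about the ``best possible'' clause is well taken and in fact sharper than anything in the paper. With $s=-1$ the identity~(\ref{6.3x}) reads $t_{n/2}(x)=t_{n/4}(x)^{2}-2(-1)^{n/4}$, so from $t_{n/2}(x)\equiv -2$ one obtains $t_{n/4}(x)\equiv 0$ only when $n/4$ is even, i.e.\ when $2^{m+2}\mid p-1$; if merely $2^{m+1}\Vert p-1$ then $n/4$ is odd and the computation gives $t_{n/4}(x)^{2}\equiv -4$ and $(x^{2}+4)u_{n/4-1}(x)^{2}\equiv 0$, so the non-vanishing conclusion fails. This is the exact analogue of the hypothesis $2^{m+2}\mid p-\ell$ in Corollary~\ref{T5.5} (whose statement, incidentally, has the same kind of typo: its proof concludes $u_{n/4-1}\not\equiv 0$, not $u_{n/2-1}\not\equiv 0$). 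So your suspicion that the divisibility condition in the ``best possible'' clause should be $2^{m+2}\mid p-\ell$ rather than $2^{m+1}\mid p-\ell$ appears to be correct; the paper's brief treatment does not address this.
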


\begin{The}\label{T6.2}  Let N$ (\alpha)=-1$ and let $ k$  be odd with $ k\ |\
p-\ell.$  If $ x^2 +4\not\equiv 0$ 
and $ x\equiv t_{k} (y;-1)$  for some $ y\in \mathbb{Z} $  then, with $
n=(p-1)/k,$ 
\begin{align} \label{6.8} t_{2n} (x)\equiv 2,\ \ t_{n} (x)\equiv 2\ell,\ \
\alpha^{n} \equiv \ell .\end{align}
 
\end{The}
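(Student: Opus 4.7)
The plan is to reduce the three assertions to Theorem 3.1, which in the norm $-1$ case gives $t_{p-\ell}(x)\equiv 2\ell$, $u_{p-\ell-1}(x)\equiv 0$, hence $\alpha^{p-\ell}\equiv\ell$ (with $\sigma=s=-1$ when $\ell=-1$ and $\sigma=1$ when $\ell=+1$, so $\sigma=\ell$ throughout this setting). The reduction is driven by the Chebyshev composition structure supplied by $x\equiv t_k(y;-1)$. Write $n=(p-\ell)/k$; since $p-\ell$ is even and $k$ is odd, $n$ is even.

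The cornerstone is the composition rule
\[
t_m\bigl(t_k(y;-1);\,(-1)^k\bigr)=t_{mk}(y;-1),
\]
which follows at once from $T_m\circ T_k=T_{mk}$ and the normalisation (2.1); conceptually, if $\alpha_y$ has trace $y$ and norm $-1$, then $\alpha_y^k$ has trace $t_k(y;-1)$ and norm $(-1)^k$. Oddness of $k$ makes $(-1)^k=-1$, so the identity applies to $(x;-1)$ directly. Taking $m=2n$ gives $mk=2(p-\ell)$, and equation (6.3) yields
\[
t_{2n}(x)=t_{2nk}(y;-1)=t_{2(p-\ell)}(y;-1)\equiv 2.
\]

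Next I would extract $t_n(x)$ and $u_{n-1}(x)$. The duplication identity $t_{2n}(x;-1)=t_n(x;-1)^2-2(-1)^n$ combined with $n$ even gives $t_n(x)^2\equiv 4$, so $t_n(x)\equiv\pm 2$. The Pell-type identity $t_n(x;s)^2-(x^2-4s)u_{n-1}(x;s)^2=4s^n$ (a direct translation of $T_n^2-(u^2-1)U_{n-1}^2=1$), specialised to $s=-1$ and $n$ even, becomes $t_n(x)^2-(x^2+4)u_{n-1}(x)^2\equiv 4$. Combining with $t_n(x)^2\equiv 4$ and the hypothesis $x^2+4\not\equiv 0$ forces $u_{n-1}(x)\equiv 0$, and formula (3.12) then yields $\alpha^n\equiv\tfrac12 t_n(x)\equiv\pm 1$.

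To fix the sign, raise to the $k$-th power: $(\alpha^n)^k=\alpha^{nk}=\alpha^{p-\ell}\equiv\ell$ by Theorem 3.1. Since $\alpha^n\equiv\pm 1$ and $k$ is odd, $(\pm 1)^k=\pm 1$ forces $\alpha^n\equiv\ell$, whence $t_n(x)\equiv 2\ell$. The main subtlety is the opening step: one must invoke the composition with the inner norm $(-1)^k=-1$, which is precisely what oddness of $k$ provides and is essential for chaining into (6.3).
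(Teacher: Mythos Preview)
The paper does not give its own proof here; it simply refers to \cite{BiPom}. Your argument is a self-contained proof and the chain of reductions is sound: composition (2.12) with $k$ odd, then (6.3) for $t_{2n}$, then the duplication and Pell identities (6.3x), (6.2x) to force $t_n(x)\equiv\pm2$ and $u_{n-1}(x)\equiv0$, and finally the $k$-th power together with Theorem~3.1 to pin the sign. You also silently repaired the evident typo $n=(p-1)/k$ to $n=(p-\ell)/k$, which is correct.

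There is one step you should make explicit. When you write $t_{2n}(x)\equiv t_{2(p-\ell)}(y;-1)\equiv 2$ and cite (6.3), note that (6.3) is stated for the trace of $\alpha$, and to apply it with $y$ in place of $x$ you need that the Legendre symbol attached to $y$ equals $\ell$, i.e.\ $\bigl(\tfrac{y^2+4}{p}\bigr)=\bigl(\tfrac{x^2+4}{p}\bigr)$. This is exactly what (6.2x) with $n=k$, $s=-1$ and $k$ odd gives:
\[
x^2+4\ \equiv\ t_k(y;-1)^2+4\ =\ (y^2+4)\,u_{k-1}(y;-1)^2 .
\]
If $u_{k-1}(y;-1)\equiv0$ then $x^2+4\equiv0$, contradicting the hypothesis; otherwise the square factor has Legendre symbol $+1$ and hence $\ell_y=\ell$, so (6.3) applies to $y$ with the same $\ell$. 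Once you insert this line, the proof is complete.
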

\begin{proof}
 Proof is given more generally in \cite{BiPom}.
\end{proof}
\section{Estimates for Conductors}
We continue to study the quadratic field $ \mathbb{Q} (\sqrt{d}$ $ )$ where $d\equiv r=1,2,3 \pmod 4$. The order of the conductor $f\in\N$ is 
\begin{align}\label{5.1x}
\mathcal{O}_{f}= \left\{\begin{array}{ll} {\{a'+b'f\sqrt{d}:\ a',b'\in\Z\}} & \text{if } r=2,3, \\
                                 \{\frac{1}{2}(a'+(f-1)b')+\frac{1}{2}b'f\sqrt {d}:\ a',b'\in\Z, \ 2\mid a'+b'\} &  \text{if }  r=1.
                                 \end{array}\right. 
\end{align}
We fix an integer $\alpha$ of $ \mathbb{Q} (\sqrt{d}$ $ )$ with $s=\text{N}(\alpha)\neq 0$ and $x$ is given by (\ref{3.11}); again we use the notation (\ref{1.1}). 
The most interesting case is that $\alpha$ is the fundamental unit of $ \mathbb{Q} (\sqrt{d})$. Following Halter-Koch we define 
\begin{align}\label{5.2}
 n(f)=n(f,\alpha)\coloneqq min \{\nu\in\N :\ \alpha^{\nu}\in\mathcal{O}_{f}\}.
\end{align}
\begin{Lem}
Let $b\not\eq 0$ be given by (\ref{1.1}) and $s,x$ by (\ref{3.10}). We write 

\begin{align}\label{5.2a}
c\coloneqq gcd(b,f), b_0\coloneqq b/c, f_0=f/c 
\end{align}
Then we have 
\begin{align}\label{5.3}
  n(f)=n(f_0)=min \{\nu\in\N :\ u_{\nu-1}(x;s)\equiv 0 \pmod {f_{0}}\}.
\end{align}
\end{Lem}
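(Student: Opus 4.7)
The plan is to combine the explicit expansion (\ref{3.12}) of $\alpha^{\nu}$ with the description (\ref{5.1x}) of $\mathcal{O}_{f}$, and then eliminate the factor $b$ from the resulting divisibility condition using $\gcd(b_{0},f_{0})=1$. I would first isolate the ``non-rational'' coordinate of $\alpha^{\nu}$: for $r=2,3$ this is the coefficient of $\sqrt{d}$ in (\ref{3.12}), which equals $b\,u_{\nu-1}(x;s)$; for $r=1$ the coefficient of $\sqrt{d}$ in (\ref{3.12}) carries a factor $1/2$, but when $\alpha^{\nu}$ is rewritten on the integral basis $\{1,(1+\sqrt{d})/2\}$ of $\mathcal{O}_{K}$ the coefficient of $(1+\sqrt{d})/2$ is again $b\,u_{\nu-1}(x;s)$.

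Next I would translate $\alpha^{\nu}\in\mathcal{O}_{f}$ into a divisibility. For $r=2,3$, (\ref{5.1x}) gives $\mathcal{O}_{f}=\mathbb{Z}+f\sqrt{d}\,\mathbb{Z}$, so the membership reads $f\mid b\,u_{\nu-1}(x;s)$. For $r=1$, matching (\ref{3.12}) against the form $\tfrac12(a'+(f-1)b')+\tfrac12 b'f\sqrt{d}$ in (\ref{5.1x}) forces $b'f=b\,u_{\nu-1}(x;s)$; I would then check that the parity condition $2\mid a'+b'$ is automatic from $\alpha^{\nu}\in\mathcal{O}_{K}$ once $f\mid b\,u_{\nu-1}(x;s)$ holds, so the condition reduces to the same $f\mid b\,u_{\nu-1}(x;s)$.

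Substituting $f=cf_{0}$ and $b=cb_{0}$ rewrites the divisibility as $f_{0}\mid b_{0}\,u_{\nu-1}(x;s)$, and the coprimality $\gcd(b_{0},f_{0})=1$ collapses it to $f_{0}\mid u_{\nu-1}(x;s)$; minimising over $\nu$ gives the claimed formula for $n(f)$. The middle equality $n(f)=n(f_{0})$ then follows because the resulting criterion depends on $f$ only through $f_{0}$, so applying the same derivation with $f_{0}$ in place of $f$ produces the identical condition. I expect the one non-bookkeeping step to be the verification of parity automaticity in the $r=1$ case, which is the only real technical point of the argument.
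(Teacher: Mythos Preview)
Your reduction of $\alpha^{\nu}\in\mathcal{O}_{f}$ to the divisibility $f\mid b\,u_{\nu-1}(x;s)$ and the subsequent cancellation of $c=\gcd(b,f)$ to reach $f_{0}\mid u_{\nu-1}(x;s)$ is exactly the paper's argument; the paper compresses the first step into a single line citing (\ref{3.10}) and (\ref{3.11}), whereas you unpack the $r=1$ case and its parity check more explicitly. For the outer equality $n(f)=\min\{\nu:f_{0}\mid u_{\nu-1}(x;s)\}$ your plan is therefore correct and essentially identical to the paper's.

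Your justification of the middle equality $n(f)=n(f_{0})$, however, does not go through. You claim that repeating the derivation with $f_{0}$ in place of $f$ yields the same condition, but that repetition would replace $f_{0}$ by $f_{0}/\gcd(b,f_{0})$, and nothing forces $\gcd(b,f_{0})=1$: only $\gcd(b_{0},f_{0})=1$ is guaranteed by (\ref{5.2a}). Concretely, take $r=2$, $b=12$, $f=8$; then $c=4$, $f_{0}=2$, and $\gcd(b,f_{0})=2>1$. Here $\alpha\in\mathcal{O}_{2}$ because $2\mid 12$, so $n(f_{0})=n(2)=1$, whereas $n(f)=n(8)=\min\{\nu:2\mid u_{\nu-1}(x;s)\}\geq 2$ since $u_{0}=1$. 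Thus the asserted equality $n(f)=n(f_{0})$ is false in general. The paper's own proof does not address this middle equality either---it only derives the rightmost characterisation---so the defect lies in the lemma's statement rather than in your overall strategy.
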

\begin{proof}
By (\ref{3.10}) and (\ref{3.11}) we have 
\begin{align*}
 \alpha^\nu\in\mathcal{O}_{f} \Leftrightarrow bu_{\nu-1}(x)\equiv 0 \pmod f.
\end{align*}
Since gcd$(b_0,f_0)=1$ by (\ref{5.2a}), it follows that 
\begin{align*}
\alpha^\nu\in\mathcal{O}_{f} \Leftrightarrow b_0u_{\nu-1}(x)\equiv 0 \pmod {f_{0}}\Leftrightarrow u_{\nu-1}(x)\equiv 0. \pmod {f_{0}}
\end{align*}
Note that $b$ has not been replaced by $b_0$. Therefore we still have $u_{\nu-1}(x)=u_{\nu-1}(x;s)$ with unchanged $x$ and $s$. 
\end{proof}
Let $g\in\N$ and gcd$(b,g)=\text{gcd}(f,g)=1$. Then it follows from (\ref{5.3}) and (\ref{6.5x}) that $u_{n(f)n(g)-1}(x;s)\equiv 0 $ modulo $f$ and also modulo $g$. 
Hence we have 
\begin{align}\label{5.4x}
 n(fg)\leq n(f)n(g)\text{\    if\    gcd}(f,g)=1. 
\end{align}
For an odd prime $p$ we define 
\begin{align}\label{5.5xx}
q(p)=q(p;\alpha)\coloneqq  min \{\nu\in\N :\ u_{\nu-1}(x;s)\equiv 0\pmod p\}.
\end{align}
The results of Sections $3$ and $4$ provide upper estimates of $q(p)$. These results refer only on $x$ and $s$ and not explicitly on $a,b$ and $d$ in (\ref{1.1}).

First let $ \ell=\left(\frac{x^2-4s}{p}\right)\neq 0.$ If $s=1$ it follows from 
Corollary $4.2$ that
\begin{align*}
 q(p)\leq \frac{p-\ell}{2^m},\text{ and }  q(p)\leq \frac{p-\ell}{2^{m+1}}\text{  if  } 2^{m+1}\mid p-\ell.
\end{align*}
If $s=-1$, $p\equiv 1\pmod 4$ and $\ell=+1$ then it follows from Corollary $4.4$ that 
\begin{align*}
 q(p)\leq \frac{p-\ell}{2^{m-1}},\text{ and }  q(p)\leq \frac{p-\ell}{2^{m}}\text{  if  } 2^{m}\mid p-\ell.
\end{align*}
Now let $x^2-4s\equiv 0 \pmod p.$ Then it follows from (\ref{2.4}) that $2^{\nu-1}u_{\nu-1}(x;s)\equiv \nu x^{\nu-1}\pmod p.$ We conclude that $q(p)=p$ if $p\nmid s$
and $q(p)=2$ if $p\mid s$.
\begin{The}
 If gcd$(f,b)=1$ and $p\nmid f$ then 
\begin{align}\label{5.6x}
 n(p^kf)\leq q(p)p^{k-1}n(f)\text{ for } k\geq 1.
\end{align}
\end{The}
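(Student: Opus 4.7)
The plan is to reduce the claim, via the Lemma, to a congruence of the form $u_{N-1}(x;s)\equiv 0\pmod{p^kf}$ with $N=q(p)p^{k-1}n(f)$, and then to establish this using two applications of the composition identity
\begin{equation*}
u_{mn-1}(x;s)=u_{m-1}\bigl(t_n(x;s);\,s^n\bigr)\,u_{n-1}(x;s),
\end{equation*}
which is the direct translation, via (\ref{2.1})--(\ref{2.2}), of the classical $U_{mn-1}(X)=U_{m-1}(T_n(X))U_{n-1}(X)$.

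By the Lemma, $n(p^kf)$ is the least $\nu$ with $u_{\nu-1}(x;s)\equiv 0\pmod{(p^kf)_0}$, and $(p^kf)_0$ divides $p^kf$, so it suffices to show $u_{N-1}(x;s)\equiv 0\pmod{p^kf}$. Because $\gcd(p,f)=1$, the Chinese Remainder Theorem splits this into two congruences. For the one modulo $f$: the assumption $\gcd(b,f)=1$ gives $f_0=f$, and then the Lemma yields $u_{n(f)-1}(x;s)\equiv 0\pmod f$. Applying the composition identity with $m=q(p)p^{k-1}$ and $n=n(f)$ exhibits $u_{n(f)-1}(x;s)$ as a factor of $u_{N-1}(x;s)$, and the congruence modulo $f$ follows.

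For the congruence modulo $p^k$ I would prove, by induction on $k\ge 1$, that $u_{Mp^{k-1}-1}(x;s)\equiv 0\pmod{p^k}$, where I abbreviate $M:=q(p)$. The base case $k=1$ is the definition of $q(p)$. For the step, decompose using the composition identity with $m=p$ and $n=Mp^{k-1}$, and put $y:=t_{Mp^{k-1}}(x;s)$, $s':=s^{Mp^{k-1}}$; then
\begin{equation*}
u_{Mp^k-1}(x;s)=u_{p-1}(y;s')\,u_{Mp^{k-1}-1}(x;s).
\end{equation*}
The second factor is divisible by $p^k$ by the inductive hypothesis, so I only need $u_{p-1}(y;s')\equiv 0\pmod p$. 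Here I invoke the Lucas-type identity $t_n(x;s)^2-(x^2-4s)u_{n-1}(x;s)^2=4s^n$ (coming from $T_n^2-(X^2-1)U_{n-1}^2=1$): applied to $n=Mp^{k-1}$ it gives $y^2-4s'=(x^2-4s)\,u_{Mp^{k-1}-1}(x;s)^2\equiv 0\pmod p$ by the hypothesis. Thus the pair $(y,s')$ lies in the degenerate situation of (\ref{2.4}), from which $2^{p-1}u_{p-1}(y;s')\equiv p\,y^{p-1}\equiv 0\pmod p$, and since $p$ is odd we conclude $u_{p-1}(y;s')\equiv 0\pmod p$. This closes the induction.

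To finish, apply the composition identity a second time with $m=n(f)$ and $n=q(p)p^{k-1}$: the factor $u_{q(p)p^{k-1}-1}(x;s)$, divisible by $p^k$ from the induction, appears in $u_{N-1}(x;s)$, giving the congruence modulo $p^k$. Together with the congruence modulo $f$ and CRT, this yields $u_{N-1}(x;s)\equiv 0\pmod{p^kf}$ and hence $n(p^kf)\le N=q(p)p^{k-1}n(f)$. The main obstacle is the inductive lifting step: one must confirm that at each level the new argument $y$ is forced into the degenerate regime $y^2\equiv 4s'\pmod p$ so that (\ref{2.4}) supplies the extra factor of $p$; without the Lucas identity this would not follow from the hypothesis alone, and the uniform bound $q(p)p^{k-1}$ would have to be loosened.
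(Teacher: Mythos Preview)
Your proof is correct and follows essentially the same approach as the paper: induction on $k$ via the composition identity (\ref{6.4x}), the Lucas-type relation (\ref{6.2x}) to force $y^2-4s'\equiv 0\pmod p$, and formula (\ref{2.4}) to extract the extra factor of $p$ at each step. The only difference is organizational---you split the target congruence into moduli $f$ and $p^k$ via the Chinese Remainder Theorem and run the induction on the $p$-power part alone, whereas the paper carries the full modulus $p^kf$ through a single induction.
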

\begin{proof}
 We use induction on $k$. By (\ref{5.3}) and (\ref{6.5x}) we have $u_{q(p)n(f)-1}(x;s)\equiv 0$ modulo $f$, but also modulo $p$ by (\ref{5.5xx}) and (\ref{6.5x}). 
Since gcd$(f,p)=1$ it follows that the congruence is true also modulo $pf$. Hence (\ref{5.6x}) holds for $k=1$ in view of (\ref{5.3}).

Now let (\ref{5.6x}) hold for $k$. We write $\nu=q(p)p^{k-1}n(f)$ and have, by (\ref{5.6x}),
\begin{align}\label{5.7x}
u_{\nu-1}(x;s)\equiv 0 \pmod {p^kf}.       
\end{align}
We apply (\ref{2.4}) with $n=p$ and with $s^\nu$ instead of $s$. The binomial coefficients in the sum are divisible by the prime $p$. Since $2^{p-1}\equiv 1\pmod p$
we thus obtain for $z\in \Z$ that 
\begin{align*}
 u_{p-1}(z;s^\nu)\equiv (z^2-4s^\nu)^{(p-1)/2}\pmod p.
\end{align*}
For $z=t_{\nu}(x;s)$ we obtain by (\ref{6.2x}), that 
\begin{align}\label{5.8x}
 u_{p-1}(t_\nu(x;s);s^\nu)\equiv\left[(x^2-4s)u_{\nu-1}(x;s)\right]^{\frac{p-1}{2}}\equiv 0 \pmod p
\end{align}
because of (\ref{5.7x}) where $k\geq 1.$ Now we apply (\ref{6.4x}) with $m=p$ and $n=\nu.$ By (\ref{5.7x}) and (\ref{5.8x}) we obtain 
\begin{align*}
 u_{q(p)p^k-1}(x;s)=u_{p\nu-1}(x;s)\equiv 0 \pmod {p^{k+1}f}.
\end{align*}
Hence it follows from (\ref{5.3}) that $n(p^{k+1}f)\leq q(p)p^k.$
\end{proof}
%

\begin{The}
 Let $f\in\N$ be odd and let $f_0$ be defined by (\ref{5.2a}). We write 
\begin{align}\label{5.9x}
 f_0=\prod_{\nu=1}^{\mu}{p_\nu}^{k_\nu}\text{           }\       (k_\nu\in\N)
\end{align}
with different primes $p_\nu$. Then 
\begin{align}\label{5.10x}
 n(f)\leq \prod_{\nu=1}^{\mu}\left(q(p_\nu){p_\nu}^{k_\nu-1}\right).
\end{align}
\end{The}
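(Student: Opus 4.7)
The plan is to combine the preceding theorem, specialised to the case of trivial base conductor, with the coprime-multiplicativity bound~(\ref{5.4x}). By Lemma~5.1 the quantity $n(f)=n(f_0)$ is the smallest $\nu$ with $u_{\nu-1}(x;s)\equiv 0\pmod{f_0}$, so with $N\coloneqq \prod_{\nu=1}^{\mu}q(p_\nu)p_\nu^{k_\nu-1}$ the whole theorem reduces to the single congruence
$$
u_{N-1}(x;s)\equiv 0\pmod{f_0}.
$$

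To handle each prime-power factor separately, I would apply the previous theorem with $f=1$, $p=p_\nu$, $k=k_\nu$; the hypotheses $\gcd(1,b)=1$ and $p_\nu\nmid 1$ are automatic, and since $n(1)=1$ the conclusion~(\ref{5.6x}) reads $n(p_\nu^{k_\nu})\le q(p_\nu)p_\nu^{k_\nu-1}$. In fact, tracing through the proof of that theorem shows the sharper statement $u_{q(p_\nu)p_\nu^{k_\nu-1}-1}(x;s)\equiv 0\pmod{p_\nu^{k_\nu}}$ directly. Because $q(p_\nu)p_\nu^{k_\nu-1}$ divides $N$, the Chebyshev semigroup identity~(\ref{6.5x})---the same tool already used to obtain~(\ref{5.4x})---propagates this to $u_{N-1}(x;s)\equiv 0\pmod{p_\nu^{k_\nu}}$ for every $\nu$. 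As the prime powers $p_\nu^{k_\nu}$ are pairwise coprime by~(\ref{5.9x}), the Chinese Remainder Theorem assembles these $\mu$ congruences into $u_{N-1}(x;s)\equiv 0\pmod{f_0}$, which is exactly what is required.

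A cleaner packaging of the same argument is straightforward induction on $\mu$: the base case $\mu=1$ is the previous theorem with $f=1$ as above, and the inductive step is a single application of~(\ref{5.4x}) splitting off one prime power $p_\mu^{k_\mu}$ coprime to the product of the remaining ones. The only point requiring a little care is the distinction between $f$ and $f_0$ in the hypothesis $\gcd(f,b)=1$ of the previous theorem: applying that theorem to an individual prime power with $f=1$ is what sidesteps this hypothesis, which might otherwise fail if some $p_\nu$ happens to divide $b$. Since $f$ is assumed odd, all $p_\nu$ are odd and the definition~(\ref{5.5xx}) of $q(p_\nu)$ applies without modification.
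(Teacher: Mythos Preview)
Your proposal is correct and follows essentially the paper's route: both arguments reduce to Theorem~5.2 applied prime-power by prime-power and then combine, the paper by building up the partial products $g_\lambda=\prod_{\nu\le\lambda}p_\nu^{k_\nu}$ inductively via Theorem~5.2, and you by treating each $p_\nu^{k_\nu}$ in isolation (Theorem~5.2 with base conductor $1$) and assembling with~(\ref{6.5x}) and CRT---your ``cleaner packaging'' by induction on~$\mu$ is literally the paper's argument. Your remark about invoking Theorem~5.2 with $f=1$ to avoid the hypothesis $\gcd(f,b)=1$ is well taken: a prime dividing $f_0$ can still divide $b$ (e.g.\ $b=3$, $f=9$ gives $f_0=3$), so the paper's direct application of Theorem~5.2 to the partial products $g_{\lambda-1}$ tacitly relies on the stronger congruence actually proved there rather than on the stated inequality; your formulation makes this explicit.
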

\begin{proof}
 Let $g_0=1$ and, for $1\leq\lambda\leq\mu,$ let $g_\lambda$ be the subproduct of (\ref{5.9x}) for $\nu=1,\dots,\lambda.$ Then $g_\lambda=p^{k_\lambda}g_{\lambda-1}$
and $p_\lambda\nmid g_{\lambda-1}$. Hence we obtain from Theorem $5.2$ applied to $f_0$ that 
\begin{align*}
 n(f_{\lambda})\leq q(p_\lambda)p^{{k_\lambda}-1}n(f_{\lambda-1}),
\end{align*}
and (\ref{5.10x}) with $f$ replaced by $f_0$ follows by induction. Finally we use that $n(f)=n(f_0)$ by Lemma $5.1.$
\end{proof}
\section{Some Formulas for Chebyshev Polynomials}
We gather some formulas that we need to prove our results, concentrating on the polynomials $u_n$ defined in (\ref{2.2}). See \cite[Sect.$5.7$,]{MOS66} and \cite{BiPom}.
 For odd $n$ and $x,s\in\C$, we have 
\begin{align}\label{2.4} 
u_{n-1}(x;s)=\frac{1}{2^{n-1}}\sum_{k=0}^{(n-3)/2}
\binom{n}{2k+1}x^{n-2k-1}(x^2 -4s)^{k}+\frac{1}{2^{n-1}}(x^2 -4s)^{\frac{n-1}{2}}.
\end{align}
The recursion formula $u_{n+1}(x)=xu_n(x)-su_{n-1}(x)$ shows that, for $x,s\in\C$,
\begin{align*}
u_{0}(x)=1,\  u_{1}(x)=x,\ u_{2}(x)=x^2-s,\  u_{3}(x)=x^3-2sx,\ \\
u_{4}(x)=x^4-3sx^2+s^2,\ u_{5}(x)=x^5-4sx^3+2s^2x.
\end{align*}
Furthermore that $t_n(x;s)$, $u_n(x;s)\in\Z\left[x,s\right]$. For $n\in\N$ we have 
\begin{align}\label{6.2x}
(x^2-4s) u_{n-1}(x;s)^2=t_{n}(x;s)^2-4s^n,              
\end{align}
\begin{align}\label{6.3x}
 t_n(x;s)^2=t_{2n}(x;s)+2s^n.
\end{align}
We need a relation for products which involves different parameters.
\begin{align}\label{6.4x}
 u_{mn-1}(x;s)= u_{m-1}(t_n(x;s);s^n)u_{n-1}(x;s)\   (m,n\in\N).
\end{align}
It follows that, for $\mu\in\N$ and $x,s\in\Z$,
\begin{align}\label{6.5x}
 u_{n-1}(x;s)\equiv 0 \pmod \mu\Rightarrow u_{mn-1}(x;s)\equiv 0 \pmod \mu.
\end{align}
To prove (\ref{6.4x}) it is sufficient to consider $\frac{x}{2\sqrt s}=\cos\theta$ with real $\theta.$ Then it follows from (\ref{2.1}), (\ref{2.2}) and the properties 
\cite[p.$257$,]{MOS66} of the $T_n$ and $U_n$ that 
\begin{align}\label{6.6x}
 t_{n}(x;s)=2s^{\frac{n}{2}}\cos(n\theta),\text{  } u_{m-1}(x;s)=s^{\frac{m-1}{2}}\frac{\sin(m\theta)}{\sin \theta}.
\end{align}
By (\ref{2.2}) and (\ref{2.1}) we therefore have 
\begin{align*}
 u_{m-1}(t_n(x;s);s^n)&=s^{n\frac{m-1}{2}}U_{m-1}(\frac{1}{2s^{n/2}t_n(x;s)})\\
&=s^{n\frac{m-1}{2}}U_{m-1}(\cos(n\theta))=s^{\frac{mn-n}{2}}\frac{\sin(mn\theta)}{\sin n\theta}.
\end{align*}
Now we multiply by $u_{n-1}(x;s).$ Using (\ref{6.6x}) we obtain
\begin{align*}
 u_{m-1}(t_n(x;s);s^n)u_{n-1}(x;s)=s^{\frac{mn-1}{2}}\frac{\sin(mn\theta)}{\sin n\theta}=u_{mn-1}(x;s)
\end{align*}
using again (\ref{6.6x}).

 In Section $4$ we use the following relation between the polynomials $ t_{n} (x;s)$  with
 different parameters $s$. If $ s\neq 0$  and $ m,n\in \mathbb{N} $  then
  \begin{align} \label{2.12} 
t_{mn} (x;s)=t_{n} (t_{m} (x;s);s^{m}
 );\end{align}
indeed it follows from (\ref{2.1})  and the composition property $ T_{mn}
=T_{n} \circ T_{m}$ 
that
\begin{align*}
t_{mn} (x;s)&=2(s^{m} )^{n/2} T_{n} (T_{m} (\frac{x}{2\sqrt{s} }))\\ 
&=t_{n} (\frac{1}{2\sqrt{s} ^{m} }T_{m} (\frac{x}{2\sqrt{s} });s^{m}
)\end{align*}
which implies (\ref{2.12})  by (\ref{2.1}).

\begin{center}
\bf{Acknowledgements}
\end{center}
The authors would like to thank Prof. Dr. F. Halter-Koch for suggesting the interesting problem of Section $5$, Prof. Dr. Christian Pommerenke for the 
identity (\ref{6.4x}) and also Prof. Dr. Attila Peth\H{o} for his valuable comments and remarks.

\end{document}